\documentclass{article}

%%%%%%%%%%%%%%%%%%%%%%%%%%%%%%%%%%%%%%%%%%
\usepackage{amsmath}
\usepackage{amssymb}
\usepackage{amsthm}
\usepackage[a4paper]{geometry}
\usepackage{graphicx}
\usepackage{tikz}
\usetikzlibrary{shapes.geometric,arrows}
\usepackage{setspace}
\usepackage{enumerate}
\numberwithin{equation}{section}
\newtheorem{theorem}{Theorem}[section]
\newtheorem{lemma}[theorem]{Lemma}
\newtheorem{proposition}[theorem]{Proposition}

\newtheorem{definition}[theorem]{Definition}
\newtheorem{example}[theorem]{Example}

\newtheorem{note}[theorem]{Note}

\title{Ideal on CL-algebra}
\author{Safiqul Islam, Arundhati Sanyal and Jayanta Sen\\
	Department of Mathematics, Taki Government College, West Bengal, India\\
	safiqulwbes@gmail.com, arundhatisanyal.byasrki@gmail.com, jayanta.wbes@gmail.com}

\begin{document}
	
	\maketitle

\begin{abstract}
In this paper, we introduce the concept of filter on IL-algebra. It is proved that this concept generalizes the notion of filter on Residuated Lattices. Prime filters on IL-algebra are defined and few interesting properties are obtained. It has been shown that quotient algebra corresponding to IL-algebra is formed with the help of filters also an IL-algebra.

\end{abstract}

{\textbf {Keywords:}{IL-algebra; Filter; Residuated lattice.}}
	
\section{Introduction}
Filter theory plays important role in the study of algebraic structures and associated logics, in some cases\cite{ras}. Recently, various researchers work on filters of various algebraic structures. P. H\'ajek introduced the concept of BL-algebra and also introduced the concept of filter on BL-algebra\cite{haj}. After that many authors contributed on various types of filters of BL-algebra\cite{tur1,tur2,sae1,sae2,sae3,sae4,mog,mot,kon}. Filters on other algebraic structures are also available in literature\cite{bor}. 

Intuitionistic Linear Algebra (IL-algebra, in short) was introduced by A. Troelstra\cite{tro} as an algebraic counterpart of linear logic\cite{gir}. IL-algebra can be found as $FL_e$-algebra in \cite{ono}. Various properties of IL-algebra are studied in \cite{senth,sen}. Filters on IL-algebra are not available in literature. In this paper, we introduce the concept of filters on IL-algebra. Properties of filters on IL-algebra are studied here.

In the next section, definition of IL-algebra is given. Properties and examples of IL-algebra are also discussed in this section. In section 3, filter on IL-algebra is introduced. Properties of filters on IL-algebra are obtained in this section. With the help of filter, we define a congruence relation on an arbitrary IL-algebra and prove that the corresponding quotient algebra is also an IL-algebra with respect to suitable operations. Some special type of filters on IL-algebra are discussed in Section 4.

\section{Intuitionistic Linear Algebra}
\begin{definition}
Let $L$ be a non empty set. An intuitionistic linear algebra (IL-algebra, in short)\cite{tro} is an algebraic system $L=(L,\cup ,\cap, \bot, \to, \ast,1 )$  which satisfies the following conditions:
\begin{itemize}
\item $(L,\cup,\cap,\bot)$ is a lattice with least element $\bot$.
\item $(L, \ast, 1)$ is a commutative monoid with unit 1.
\item for any $x,y,z \in L$ , $x\ast y \leq z$ if and only if $x \leq y \to z$. [residuation property]
\end{itemize}

\end{definition}

\begin{theorem}\label{lem1}
	In every IL-algebra $L$, the following results hold for all $x, y, x_1, y_1, z \in L$. \cite{tro,senth,sen}
	\begin{enumerate}
		\item $x \ast (y \cup z)= (x \ast y)\cup (x \ast z)$ and moreover, if the join $\underset{i\in I}{\bigcup}y_i$ exists, then $x \ast \underset{i\in I}{\bigcup}y_i= \underset{i\in I}{\bigcup}(x \ast y_i)$. 
		\item $\bot \to \bot$ is the largest element of $L$ and is denoted by $\top$.
		\item If $x, y \leq 1$  then $x \ast y  \leq x\cap y$.
		\item $1 \leq x, y$ then $x \cup y \leq x  \ast y$.
		\item $(x\rightarrow y)\ast (y\rightarrow z)\leq (x\rightarrow z)$.
		\item $1\to x =x$.
		\item If $x\leq x_1, y\leq y_1$ then $x\ast y\leq x_1\ast y_1$ and $x_1\to y\leq x\to y_1$.
		\item $x\to (y\to z) = (x \ast y)\to z$.
		\item $x\ast (x\to y) \leq y$.
		\item $1\leq x\to x$.
	\end{enumerate}
\end{theorem}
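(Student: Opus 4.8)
The plan is to derive every item from the residuation property $x\ast y\le z\iff x\le y\to z$ together with the lattice and commutative-monoid axioms; the adjunction is essentially the only tool needed. Two moves recur: to prove an inequality $a\le b$ I rewrite it via residuation until it becomes trivially true (usually $u\le u$, or $\bot\le u$, or an instance of commutativity or the unit law), and to prove an equality $a=b$ I show $c\le a\iff c\le b$ for all $c$ by applying residuation on both sides and then invoke antisymmetry. Since several items feed into others, I would also fix a proof order.

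First I would dispatch the items needing nothing but the adjunction. For (9), commutativity turns $x\ast(x\to y)\le y$ into $(x\to y)\ast x\le y$, which by residuation is just $x\to y\le x\to y$. For (10), $1\le x\to x$ is equivalent to $1\ast x\le x$, i.e.\ $x\le x$. For (8), for arbitrary $a$ two applications of residuation together with associativity give $a\le x\to(y\to z)\iff a\ast(x\ast y)\le z\iff a\le(x\ast y)\to z$, so the two elements coincide. For (6), $x\le 1\to x$ is $x\ast 1\le x$, while $1\to x\le x$ follows because $(1\to x)\ast 1=1\to x$ and residuation reduces $(1\to x)\ast 1\le x$ to $1\to x\le 1\to x$.

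Next comes the monotonicity item (7), on which later items rely. For $x\ast y\le x_1\ast y_1$ when $x\le x_1$ and $y\le y_1$, I would chase through residuation twice, reducing successively to $x_1\ast y\le x_1\ast y_1$ and then to $y_1\ast x_1\le x_1\ast y_1$, which is commutativity. The mixed-monotonicity statement $x_1\to y\le x\to y_1$ follows from residuation together with (9): $(x_1\to y)\ast x\le(x_1\to y)\ast x_1\le y\le y_1$. With monotonicity available, (3) and (4) are immediate: for (3), $x,y\le 1$ gives $x\ast y\le x\ast 1=x$ and $x\ast y\le 1\ast y=y$, hence $x\ast y\le x\cap y$, and (4) is dual; and (5) reduces by residuation to $(x\to y)\ast(y\to z)\ast x\le z$, which two applications of (9) settle after regrouping as $(y\to z)\ast(x\ast(x\to y))$. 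For (2): $\bot\le x\to\bot$ holds since $\bot$ is least, so $x\ast\bot\le\bot$ and hence $x\ast\bot=\bot$; then $x\le\bot\to\bot$ for every $x$, so $\bot\to\bot$ is the top element.

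Finally (1). For the binary case, for every $w$ we have that $x\ast(y\cup z)\le w$ iff $y\cup z\le x\to w$ iff both $y\le x\to w$ and $z\le x\to w$ iff both $x\ast y\le w$ and $x\ast z\le w$ iff $(x\ast y)\cup(x\ast z)\le w$; antisymmetry then gives the equality. The arbitrary-join case is the identical computation with the family $(y_i)_{i\in I}$ replacing $\{y,z\}$, using only that $\bigcup_{i\in I}y_i\le u$ iff $y_i\le u$ for all $i$, so no completeness assumption is needed beyond the stated existence of the join. I do not anticipate a genuine obstacle: the only points needing care are the dependency order (monotonicity and (9) before (3), (5), (7)) and phrasing the argument for (1) purely through the universal property of joins so that the infinite case costs nothing extra.
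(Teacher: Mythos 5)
Your proposal is correct. The paper itself gives no proof of this theorem---it simply cites Troelstra and the later references---so there is no argument of the paper's to compare against; your adjunction-based derivation is the standard one and is complete. All the reductions check out: the dependency order you impose (residuation-only items (6), (8), (9), (10) first, then monotonicity (7), then (2)--(5), then (1)) is sound, the only implicit uses of commutativity (e.g.\ rewriting $x\ast(y\cup z)\le w$ as $(y\cup z)\ast x\le w$ before residuating, since the paper's residuation law is stated as $x\ast y\le z\iff x\le y\to z$) are legitimate, and your universal-property phrasing of item (1) correctly yields both the existence of $\bigcup_{i\in I}(x\ast y_i)$ and its identification with $x\ast\bigcup_{i\in I}y_i$ without any completeness hypothesis.
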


\begin{example} \label{ex1}
	Example of an IL-algebra.
	Let, $X=\{ \bot,b,c,d,1,\top \}$. Lattice ordering, $\ast$ and $\rightarrow$ tables are the following	
	\begin{center}
		\begin{tikzpicture}[scale=.7]
		\node (top) at (0,2) {$\top$}; 
		\node (d) at (2,0) {$d$}; 
		\node (one) at (0,0) {$1$}; 
		\node (c) at (-2,0) {$c$};
		\node (b) at (0,-2) {$b$}; 
		\node (bot) at (0,-4) {$\bot$};
		\draw (bot) -- (b) -- (d) -- (top);
		\draw (b) -- (one) -- (top);
		\draw (b) -- (c) -- (top);
		\end{tikzpicture}
	\end{center}
\begin{center}
			\begin{tabular}{ l | l l l l l l }
			\textbf{$\ast$} & \textbf{$\bot$} & \textbf{$b$} & \textbf{$c$} & \textbf{$d$} & \textbf{$1$} & \textbf{$\top$}  \\ \hline
			$\bot$ & $\bot$ & $\bot$ & $\bot$ & $\bot$ & $\bot$ & $\bot$ \\ 
			$b$ & $\bot$ & $\bot$ & $b$ & $b$ & $b$ & $b$ \\ 
			$c$ & $\bot$ & $b$ & $c$ & $c$ & $c$ & $c$ \\ 
			$d$ & $\bot$ & $b$ & $c$ & $1$ & $d$ & $\top$ \\ 
			$1$ & $\bot$ & $b$ & $c$ & $d$ & $1$ & $\top$ \\ 
			$\top$ & $\bot$ & $b$ & $c$ & $\top$ & $\top$ & $\top$
		\end{tabular}		
	\hspace{1cm}
			\begin{tabular}{ l | l l l l l l }
			\textbf{$\rightarrow$} & \textbf{$\bot$} & \textbf{$b$} & \textbf{$c$} & \textbf{$d$} & \textbf{$1$} & \textbf{$\top$}  \\ \hline
			$\bot$ & $\top$ & $\top$ & $\top$ & $\top$ & $\top$ & $\top$ \\ 
			$b$ & $b$ & $\top$ & $\top$ & $\top$ & $\top$ & $\top$ \\
			$c$ & $\bot$ & $b$ & $\top$ & $b$ & $b$ & $\top$ \\ 
			$d$ & $\bot$ & $b$ & $c$ & $1$ & $d$ & $1$ \\ 
			$1$ & $\bot$ & $b$ & $c$ & $d$ & $1$ & $\top$ \\ 
			$\top$ & $\bot$ & $b$ & $c$ & $b$ & $b$ & $\top$
		\end{tabular}
	\end{center}		
	
	Then, $(L,\ast,\cup,\cap,\rightarrow,1,\top)$ is an IL-algebra.
	
\end{example}
\begin{example}\label{ex2}
	Example of an IL-algebra where strict inequality of the property $x\cup y \leq x\ast y$, for $1\leq x,y$ holds. \\
	Let, $L=\{\bot,b,c,d,1,\top\}$. Lattice ordering, $*$ and $\rightarrow$ tables are the following \\
	
	\begin{center}
		\begin{tikzpicture}[scale=1]
		\node (top) at (0,5) {$\top$}; 
		\node (d) at (0,4) {$d$}; 
		\node (c) at (0,3) {$c$}; 
		\node (b) at (0,2) {$b$};
		\node (one) at (0,1) {$1$}; 
		\node (bot) at (0,0) {$\bot$};
		\draw (bot) -- (one) -- (b) -- (c)-- (d) -- (top);
		\end{tikzpicture}
	\end{center}
\begin{center}
		\begin{tabular}{ l | l l l l l l }
			\textbf{$\ast$} & \textbf{$\bot$} & \textbf{$b$} & \textbf{$c$} & \textbf{$d$} & \textbf{$1$} & \textbf{$\top$}  \\ \hline
			$\bot$ & $\bot$ & $\bot$ & $\bot$ & $\bot$ & $\bot$ & $\bot$ \\ 
			$b$ & $\bot$ & $\top$ & $\top$ & $\top$ & $b$ & $\top$ \\ 
			$c$ & $\bot$ & $\top$ & $\top$ & $\top$ & $c$ & $\top$ \\ 
			$d$ & $\bot$ & $\top$ & $\top$ & $\top$ & $d$ & $\top$ \\ 
			$1$ & $\bot$ & $b$ & $c$ & $d$ & $1$ & $\top$ \\ 
			$\top$ & $\bot$ & $\top$ & $\top$ & $\top$ & $\top$ & $\top$
		\end{tabular} 
		\hspace{0.5cm}	
		\begin{tabular}{ l | l l l l l l }
			\textbf{$\rightarrow$} & \textbf{$\bot$} & \textbf{$b$} & \textbf{$c$} & \textbf{$d$} & \textbf{$1$} & \textbf{$\top$}  \\ \hline
			$\bot$ & $\top$ & $\top$ & $\top$ & $\top$ & $\top$ & $\top$ \\ 
			$b$ & $\bot$ & $1$ & $1$ & $1$ & $\bot$ & $\top$ \\
			$c$ & $\bot$ & $\bot$ & $1$ & $1$ & $\bot$ & $\top$ \\ 
			$d$ & $\bot$ & $\bot$ & $\bot$ & $1$ & $\bot$ & $\top$ \\ 
			$1$ & $\bot$ & $b$ & $c$ & $d$ & $1$ & $\top$ \\ 
			$\top$ & $\bot$ & $\bot$ & $\bot$ & $\bot$ & $\bot$ & $\top$ 
		\end{tabular}

\end{center}
	In this example, we can see that $c\ast d=\top$ but $c\cup d=d$.
\end{example}

\section{Filters}
In the second example of the previous section, we noticed that there are $x,y\geq 1$ for which $x\cap y \lneqq x\ast y$ holds. It motivates us to define filter on IL-algebra in the following manner.

\begin{definition}
Let $L$ be an IL-algebra. A non-empty subset $F$ of $L$ is said to be a filter if the following are satisfied
\begin{enumerate}
	\item $1 \in F$
	\item If $x, y \in F $ then $x \ast y \in F$ and $x \cap y \in F$
	\item If $x \in F$ and $x \leq y$ then $y \in F$
\end{enumerate}
\end{definition}

\begin{note} It may be noted that usually $x\cap y \in F$, for $x,y\in F$ is not taken in the definition of filter for other known algebraic structures like Residuated Lattice, BL-algebra. In these algebraic structures $x\ast y\leq x\cap y$ for all $x,y$ and so $x\ast y\in F$ implies $x\cap y\in F$. As the property $x\ast y\leq x\cap y$ for all $x,y$ is not available in IL-algebra, we have to take it in the definition of filter for IL-algebra.
\end{note}

\begin{example}
	Let $F=\{1,b,c,d,\top\}$ in Example \ref{ex2}. Then $F$ is a filter in $L$.
\end{example}
\begin{proposition}\label{prop1}
	Let, $F$ be a filter on an IL-algebra $L$. If $x \leq y$ then $x \to y \in F$.
\end{proposition}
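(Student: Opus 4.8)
The plan is to reduce the claim to the upward-closure axiom in the definition of a filter, by showing that $x\to y$ sits above the unit $1$ whenever $x\le y$. Since $1\in F$ by the first filter axiom, it suffices to prove $1\le x\to y$; then the third filter axiom (if $u\in F$ and $u\le v$ then $v\in F$) applied with $u=1$ and $v=x\to y$ gives $x\to y\in F$ at once.

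To establish $1\le x\to y$ I would appeal to the residuation property directly. Because $1$ is the unit of the commutative monoid $(L,\ast,1)$ we have $1\ast x=x$ for every $x$ (this is also item (6) of Theorem \ref{lem1}, $1\to x=x$, read off from $1\ast x\le x$ and $1\le x\to x$). Hence the hypothesis $x\le y$ may be rewritten as $1\ast x\le y$, and the residuation equivalence $a\ast b\le c\iff a\le b\to c$, specialized to $a=1$, $b=x$, $c=y$, yields exactly $1\le x\to y$.

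Putting the two steps together: $1\in F$ and $1\le x\to y$, so by axiom (3) of the definition of a filter we conclude $x\to y\in F$, as required.

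I do not anticipate any real obstacle; the only point requiring a little care is to invoke the monoid unit law in precisely the form $1\ast x=x$ so that the residuation equivalence can be applied verbatim. As an alternative one could instead derive $1\le x\to y$ from $1\le x\to x$ (item (10) of Theorem \ref{lem1}) together with the monotonicity in item (7), which gives $x\to x\le x\to y$ since $x\le y$; but the residuation argument above is the shortest route.
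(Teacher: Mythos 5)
Your argument is correct and is essentially the paper's own proof: the paper also derives $1\le x\to y$ from $x\le y$ (via residuation applied to $1\ast x=x\le y$) and then concludes $x\to y\in F$ from $1\in F$ and upward closure. Your version simply spells out the residuation step that the paper leaves implicit.
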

\begin{proof}
	$x\leq y$ implies $1\leq (x\rightarrow y)$. \\
	So, $x\rightarrow y\in F$.
\end{proof}

Let $F$ be a filter of an IL-algebra $L$. We define a binary relation $\rho$ on $L$ by $x \rho y$ holds if and only if $x \to y \in F$ and $y \to x \in F$. It can be checked that $\rho$ is a congruence relation on $L$. The set of all congruence  classes is denoted by $ L/F $ i.e.,   \\
$L/ F = {\{[x]: x \in L\}} $ where $[x]= \{y \in L : x \rho y \}$. \\
Now we define $\cap, \cup, \to, \ast$, on $L/F$ by $[x] \Box [y]=[x \Box y]$ where $\Box \in \{\cap, \cup, \to, \ast\}$. \\
Now, $[\bot] =\{x \in L : x \to \bot \in F, \bot \to x \in F \} = \{x\in L: x\to \bot \in F \}$ by Proposition \ref{prop1}.  \\ 
Similarly, by Proposition \ref{prop1}, $[\top]=\{x\in L :\top \to x \in F \}$ and by Proposition \ref{prop1} and Theorem \ref{lem1}.6, we get $[1]=\{x\in F: x\to 1 \in F \}$.

\begin{proposition}\label{prop2}
	$[x]\leq [y]$ if and only if $x\to y \in F$.
\end{proposition}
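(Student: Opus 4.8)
The plan is to first pin down what ``$[x]\le[y]$'' means in the quotient, then reduce that statement to a membership condition on $F$ using Proposition \ref{prop1} together with the behaviour of $\to$ over meets. Since $\rho$ is a congruence on the lattice $(L,\cup,\cap,\bot)$, the quotient $(L/F,\cup,\cap)$ is again a lattice, so $[x]\le[y]$ means precisely $[x]\cap[y]=[x]$, i.e. $[x\cap y]=[x]$, i.e. $x\,\rho\,(x\cap y)$. Unwinding the definition of $\rho$, this is: $x\to(x\cap y)\in F$ and $(x\cap y)\to x\in F$. The second clause is automatic, since $x\cap y\le x$ gives $(x\cap y)\to x\in F$ by Proposition \ref{prop1}. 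Hence it remains to prove that $[x]\le[y]$ is equivalent to $x\to(x\cap y)\in F$, and then that $x\to(x\cap y)\in F\iff x\to y\in F$.

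For the last equivalence, the forward direction is immediate: $x\cap y\le y$, so $x\to(x\cap y)\le x\to y$ by Theorem \ref{lem1}.7, and since $F$ is upward closed (filter axiom 3), $x\to y\in F$. For the converse I would use the identity $x\to(x\cap y)=(x\to x)\cap(x\to y)$, which follows from residuation: for any $z$, $z\le x\to(x\cap y)$ iff $z\ast x\le x\cap y$ iff ($z\ast x\le x$ and $z\ast x\le y$) iff ($z\le x\to x$ and $z\le x\to y$) iff $z\le(x\to x)\cap(x\to y)$. Now $x\le x$ gives $x\to x\in F$ by Proposition \ref{prop1}, and $x\to y\in F$ by hypothesis, so filter axiom (2) yields $(x\to x)\cap(x\to y)=x\to(x\cap y)\in F$, as required.

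Putting the two reductions together gives $[x]\le[y]\iff x\to(x\cap y)\in F\iff x\to y\in F$, which is the claim. The only step that needs more than a one-line appeal to the filter axioms and Proposition \ref{prop1} is the identity $x\to(x\cap y)=(x\to x)\cap(x\to y)$ — equivalently, the fact that $\to$ distributes over finite meets in its second argument — and that is handled by the residuation property as above (in fact only the inequality $(x\to x)\cap(x\to y)\le x\to(x\cap y)$ is strictly needed, so upward closure of $F$ finishes it). As an alternative one could run the argument through $\cup$ instead: $[x]\le[y]$ iff $[x\cup y]=[y]$ iff $(x\cup y)\to y\in F$ (the reverse implication $y\to(x\cup y)\in F$ being free from $y\le x\cup y$ and Proposition \ref{prop1}), and $(x\cup y)\to y=(x\to y)\cap(y\to y)$ by Theorem \ref{lem1}.1 and residuation, which lies in $F$ exactly when $x\to y$ does.
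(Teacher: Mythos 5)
Your proposal is correct and follows essentially the same route as the paper: both reduce $[x]\le[y]$ to $x\to(x\cap y)\in F$ via $[x\cap y]=[x]$, and both establish the converse by showing $(x\to x)\cap(x\to y)\le x\to(x\cap y)$ through residuation (your identity is the $z=x$ case of the paper's Lemma \ref{lem2}, whose proof the paper simply inlines here). The only cosmetic difference is in the forward direction, where you use monotonicity of $\to$ in its second argument (Theorem \ref{lem1}.7) in place of the paper's appeal to $(x\to(x\cap y))\ast((x\cap y)\to y)\le x\to y$ (Theorem \ref{lem1}.5); both are valid one-step arguments.
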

\begin{proof}
		Let $[x]\leq [y]$. So, $[x]\cap [y] =[x]$. Thus $x\to (x\cap y)\in F$. As $x\cap y \leq y,\ (x\cap y)\to y \in F$.\\
	Then $(x\to (x\cap y)) \ast ((x\cap y)\to y )\in F$.\\
	From Theorem \ref{lem1}.5, we get $(x\to (x\cap y)) \ast ((x\cap y)\to y )\leq x\to y$.
	Therefore $x\to y \in F$.
	
	Conversely, let $x\to y \in F$. We want to show that $[x\cap y]=[x]$.\\
	Since $x\cap y\leq x$ therefore $x\cap y \to x \in F$. \\
	Again $1\leq x\to x$ and $x\to y \in F$ so $(x\to x)\cap (x \to y)\in F$.\\
	$x\ast ((x\to x)\cap (x \to y))\leq x\ast (x\to x) \leq x$ and\\
	$x\ast ((x\to x)\cap (x \to y))\leq x\ast (x\to y) \leq y$. \\
	Thus $x\ast ((x\to x)\cap (x \to y))\leq x\cap y$.\\
	Hence by residuation property, $(x\to x)\cap (x \to y) \leq x\to x\cap y$. \\
	Therefore $x\to (x \cap y)\in F$. 	So, $[x]\leq [y]$. 
\end{proof}
\begin{lemma}\label{lem2}
In an IL-algebra $L$, $(z\to x) \cap (z \to y)= z\to (x \cap y)$ for all $x,y,z \in L$.
\end{lemma}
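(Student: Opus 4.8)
The plan is to prove the two inequalities $(z\to x)\cap(z\to y)\le z\to(x\cap y)$ and $z\to(x\cap y)\le (z\to x)\cap(z\to y)$ separately, using only the residuation property and the monotonicity facts from Theorem \ref{lem1}. The second inequality is the easy half: since $x\cap y\le x$ and $x\cap y\le y$, Theorem \ref{lem1}.7 gives $z\to(x\cap y)\le z\to x$ and $z\to(x\cap y)\le z\to y$, and hence $z\to(x\cap y)\le (z\to x)\cap(z\to y)$ by the definition of meet.

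For the harder inequality I would reuse the computation already carried out inside the proof of Proposition \ref{prop2}. Write $w=(z\to x)\cap(z\to y)$. Then $w\le z\to x$ and $w\le z\to y$, so by the residuation property (applied in the direction $a\le b\to c \Rightarrow a\ast b\le c$, together with commutativity) we get $z\ast w\le x$ and $z\ast w\le y$; more carefully, from $w\le z\to x$ residuation yields $w\ast z\le x$, i.e. $z\ast w\le x$, and similarly $z\ast w\le y$. Therefore $z\ast w\le x\cap y$, and one final application of residuation gives $w\le z\to(x\cap y)$, which is exactly $(z\to x)\cap(z\to y)\le z\to(x\cap y)$. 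Combining the two inequalities yields the claimed equality.

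I expect no serious obstacle here; the only point requiring a little care is bookkeeping the sides of $\ast$ when invoking residuation, which is harmless since $\ast$ is commutative. It is worth noting that this lemma generalizes the step in Proposition \ref{prop2} where the special case $(x\to x)\cap(x\to y)\le x\to(x\cap y)$ was established by hand; the general argument is the same, with $z$ in place of the repeated $x$.
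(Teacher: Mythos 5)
Your proposal is correct and follows essentially the same route as the paper: the easy inequality via monotonicity of $z\to(-)$, and the hard one by showing $z\ast\bigl((z\to x)\cap(z\to y)\bigr)\le x\cap y$ and applying residuation. The only cosmetic difference is that you derive $z\ast w\le x$ directly from $w\le z\to x$ by residuation, whereas the paper routes through monotonicity of $\ast$ and the inequality $z\ast(z\to x)\le x$; these are interchangeable.
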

\begin{proof}
	As $x \cap y \leq x,y$, by Theorem \ref{lem1}.7, we get $z\to (x \cap y)\leq z \to x, z\to y$. \\ 
	Therefore $z\to (x\cap y)\leq (z\to x)\cap(z\to y)$. \\ 
	Now $z\ast((z \to x)\cap(z \to y)) \leq z\ast (z\to x)\leq x $. \\
	Similarly, $z\ast((z \to x)\cap(z \to y)) \leq y $. \\
	So $z\ast ((z \to x)\cap (z \to y)) \leq x \cap y$. \\
	Then by residuation property, we get $(z \to x)\cap (z \to y) \leq z\to  (x \cap y)$. \\	
	Hence $(z \to x)\cap (z \to y) = z\to  (x \cap y)$. 	
\end{proof}

\begin{lemma}\label{lem3}
In an IL-algebra $L$, $(x\to z) \cap (y \to z)= (x\cup y)\to z$ for all $x,y,z \in L$.
\end{lemma}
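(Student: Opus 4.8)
The plan is to prove the two inequalities $(x\cup y)\to z \leq (x\to z)\cap (y \to z)$ and $(x\to z)\cap (y \to z) \leq (x\cup y)\to z$ separately, in parallel with the structure of the proof of Lemma \ref{lem2}.

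For the first inequality, I would start from $x \leq x\cup y$ and $y\leq x\cup y$ and apply the antitonicity of $\to$ in its first argument (Theorem \ref{lem1}.7), which gives $(x\cup y)\to z \leq x\to z$ and $(x\cup y)\to z \leq y\to z$; taking the meet yields $(x\cup y)\to z \leq (x\to z)\cap (y\to z)$.

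For the reverse inequality, set $w=(x\to z)\cap (y\to z)$. By the residuation property it is enough to show $(x\cup y)\ast w \leq z$. Here is the step where this argument genuinely differs from that of Lemma \ref{lem2}: I would invoke the distributivity of $\ast$ over $\cup$ (Theorem \ref{lem1}.1) to write $(x\cup y)\ast w = (x\ast w)\cup (y\ast w)$. Then $x\ast w \leq x\ast (x\to z)\leq z$ by monotonicity (Theorem \ref{lem1}.7) and Theorem \ref{lem1}.9, and symmetrically $y\ast w \leq z$, so $(x\cup y)\ast w \leq z\cup z = z$. A second application of residuation then gives $w \leq (x\cup y)\to z$, as required.

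Combining the two inequalities gives the claimed equality. I do not expect any real obstacle; the only point to be careful about is applying the distributivity law in the correct orientation (join on the left factor of $\ast$), after which everything reduces to the elementary inequalities already recorded in Theorem \ref{lem1}.
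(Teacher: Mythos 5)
Your proposal is correct and follows essentially the same route as the paper's proof: the easy inequality via antitonicity of $\to$ in its first argument, and the reverse inequality via $x\ast w\leq z$, $y\ast w\leq z$, the distributivity of $\ast$ over $\cup$ from Theorem \ref{lem1}.1, and residuation. The only difference is presentational (you announce the residuation reduction up front rather than at the end), so there is nothing further to add.
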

\begin{proof}
	As $x,y\leq x\cup y$, by Theorem \ref{lem1}.7, we get $(x\cup y)\to z\leq x \to z, y\to z$. \\ 
	Therefore $(x\cup y)\to z\leq (x\to z)\cap(y\to z)$. \\ 
	Now $x\ast((x \to z)\cap(y \to z)) \leq x\ast (x\to z)\leq z $. \\
	Similarly, $y\ast((x \to z)\cap(y \to z)) \leq z $. \\
	So $(x\ast((x \to z)\cap(y \to z)))\cup (y\ast((x \to z)\cap(y \to z)))\leq z$. \\
	By Theorem \ref{lem1}.1, $((x \to z)\cap(y \to z))\ast (x\cup y)\leq z$
	Then by residuation property, we get $(x \to z)\cap(y \to z)\leq (x\cup y)\to z$. \\	
	Hence $(x\to z) \cap (y \to z)= (x\cup y)\to z$.	
\end{proof}

\begin{proposition}
	$L/F$ is closed under $\Box$, where $\Box \in \{\cap, \cup, \to, \ast\}$.
\end{proposition}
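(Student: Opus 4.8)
The content of this proposition is really that each of the four operations descends to a well-defined operation on $L/F$: once we know that the class $[x\Box y]$ depends only on the classes $[x]$ and $[y]$, the equality $[x]\Box[y]=[x\Box y]\in L/F$ is automatic. So the plan is to fix $x,x',y,y'\in L$ with $x\,\rho\,x'$ and $y\,\rho\,y'$, that is, with $x\to x'$, $x'\to x$, $y\to y'$, $y'\to y$ all in $F$, and to show $[x\Box y]=[x'\Box y']$ for each $\Box\in\{\ast,\to,\cap,\cup\}$. Since the hypotheses are symmetric in the primed and unprimed letters, in each case it suffices to prove $(x\Box y)\to(x'\Box y')\in F$; the reverse implication then follows by exchanging the roles of the two sets of letters.

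For $\ast$ I would first establish the inequality $(x\to x')\ast(y\to y')\leq(x\ast y)\to(x'\ast y')$. Using commutativity and associativity of $\ast$ together with Theorem \ref{lem1}.9 and the monotonicity part of Theorem \ref{lem1}.7, one has $(x\ast y)\ast(x\to x')\ast(y\to y')=\bigl(x\ast(x\to x')\bigr)\ast\bigl(y\ast(y\to y')\bigr)\leq x'\ast y'$, and the desired inequality follows from the residuation property. Since $x\to x'$ and $y\to y'$ lie in $F$, filter axiom (2) gives $(x\to x')\ast(y\to y')\in F$, and then axiom (3) yields $(x\ast y)\to(x'\ast y')\in F$. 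For $\to$ I would apply Theorem \ref{lem1}.5 twice to get $(x'\to x)\ast(x\to y)\ast(y\to y')\leq x'\to y'$; regrouping and using residuation gives $(x'\to x)\ast(y\to y')\leq(x\to y)\to(x'\to y')$, and since $x'\to x,\,y\to y'\in F$, axioms (2) and (3) place $(x\to y)\to(x'\to y')$ in $F$.

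For $\cap$ and $\cup$ the two lemmas just proved do the work. Because $x\cap y\leq x$, Theorem \ref{lem1}.7 gives $x\to x'\leq(x\cap y)\to x'$, so $(x\cap y)\to x'\in F$, and similarly $(x\cap y)\to y'\in F$; filter axiom (2) then gives $\bigl((x\cap y)\to x'\bigr)\cap\bigl((x\cap y)\to y'\bigr)\in F$, which by Lemma \ref{lem2} equals $(x\cap y)\to(x'\cap y')$. Dually, from $x'\leq x'\cup y'$ and Theorem \ref{lem1}.7 we get $x\to(x'\cup y')\in F$ and $y\to(x'\cup y')\in F$, hence $\bigl(x\to(x'\cup y')\bigr)\cap\bigl(y\to(x'\cup y')\bigr)\in F$, which by Lemma \ref{lem3} equals $(x\cup y)\to(x'\cup y')$. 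Thus all four classes are independent of the chosen representatives, and $L/F$ is closed under each $\Box$.

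I do not expect a genuine obstacle here; the verification is routine. The only care needed is in choosing, for $\ast$ and $\to$, the auxiliary inequality whose right-hand side is exactly the implication one wants, so that the residuation property applies directly, and in remembering that filter axiom (2) closes $F$ under $\cap$ as well as $\ast$ — this closure under $\cap$ is precisely what makes the $\cap$ and $\cup$ cases go through via Lemmas \ref{lem2} and \ref{lem3}.
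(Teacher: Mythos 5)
Your proof is correct and follows essentially the same route as the paper: the $\cap$ and $\cup$ cases go through Lemmas \ref{lem2} and \ref{lem3} exactly as in the text, and the $\ast$ and $\to$ cases establish the same auxiliary inequalities $(x\to x')\ast(y\to y')\leq(x\ast y)\to(x'\ast y')$ and $(x'\to x)\ast(y\to y')\leq(x\to y)\to(x'\to y')$ before invoking residuation and the filter axioms. The only cosmetic difference is that for $\to$ you derive the intermediate inequality by applying Theorem \ref{lem1}.5 twice, whereas the paper unwinds it via Theorem \ref{lem1}.9; both are equivalent one-line computations.
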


\begin{proof} 
\begin{itemize}
    \item We first show that $L/F$ is closed under $\cap$.\\
	Let $x_1\in[x] , y_1\in[y]$. \\
	We shall show that $[x_1]\cap [y_1]= [x] \cap [y]$ i.e., $[x_1\cap y_1] = [x\cap y]$.  Thus we have to show that
	$ (x \cap y) \to (x_1 \cap y_1) $ and $(x_1 \cap y_1) \to (x \cap y) $  both are in $F$.  \\
	By assumption $x \to x_1, x_1 \to x, y \to y_1, y_1\to y \in F$. \\ 
	Now by Theorem \ref{lem1}.7, $x\to x_1 \leq (x \cap y) \to x_1$. Then $(x\cap y) \to x_1 \in F$. \\ 
	Similarly $(x \cap y) \to y_1 \in F$. Then by definition of a filter, $((x\cap y) \to x_1) \cap ((x \cap y) \to y_1) \in F$. \\ 
	Therefore $(x \cap y)\to (x_1 \cap y_1) \in F$, by Lemma \ref{lem2}. \\
	Similarly by changing the roles of $x, y$ with $x_1, y_1$ respectively we get $(x_1 \cap y_1)\to (x \cap y) \in F$. \\
	Thus $[x_1\cap y_1]= [x\cap y]$.
	
	\item Now we show that $L/F$ is closed under $\cup$.\\
	Let $x_1\in[x] , y_1\in[y]$. \\
	We shall show that $[x_1]\cup [y_1]= [x] \cup [y]$ i.e., $[x_1\cup y_1] = [x\cup y]$. Thus we have to show that
	$ (x \cup y) \to (x_1 \cup y_1) $ and $(x_1 \cup y_1) \to (x \cup y) $  both are in $F$. \\
	By assumption $x \to x_1, x_1 \to x, y \to y_1, y_1\to y \in F$. \\
	Now by Theorem \ref{lem1}.7, $x_1\to x \leq x_1 \to (x \cup y)$. Then $x_1 \to (x \cup y) \in F$. \\ 
	Similarly $y_1 \to (x \cup y) \in F$. Then by definition of a filter, $(x_1 \to (x \cup y)) \cap (y_1 \to (x \cup y)) \in F$. \\ 
	Therefore $(x_1 \cup y_1)\to (x \cup y) \in F$, by Lemma \ref{lem3}. \\
	Similarly by changing the roles of $x, y$ with $x_1, y_1$ respectively we get $(x \cup y)\to (x_1 \cup y_1) \in F$. \\
	Thus $[x_1\cup y_1]= [x\cup y]$.
	
	\item Now we show that $L/F$ is closed under $\to$.\\
	Let $x_1\in[x] , y_1\in[y]$. \\
	We shall show that $[x_1]\to [y_1]= [x] \to [y]$ i.e., $[x_1\to y_1] = [x\to y]$. Thus we have to show that
	$ (x \to y) \to (x_1 \to y_1) $ and ($x_1 \to y_1) \to (x \to y) $  both are in $F$. \\
	By assumption $x \to x_1, x_1 \to x, y \to y_1, y_1\to y \in F$. \\ 
	Then by definition of a filter, $(x_1 \to x)\ast(y \to y_1) \in F$. \\
	Now by Theorem \ref{lem1}.9, $x_1 \ast (x\to y)\ast(x_1 \to x)\ast(y \to y_1)\leq y_1$ \\
	By Residuation property, $(x_1 \to x)\ast(y \to y_1) \leq (x\to y)\ast (x_1\to y_1)$. Then $(x\to y)\ast (x_1\to y_1)\in F$. \\
	Similarly by changing the roles of $x, y$ with $x_1, y_1$ respectively we get $(x_1 \to y_1)\to (x \to y) \in F$. Thus $[x_1\to y_1]= [x\to y]$.
	
    \item Now we show that $L/F$ is closed under $\ast$.\\
	Let $x_1\in[x] , y_1\in[y]$. \\
	We shall show that $[x_1]\ast [y_1]= [x] \ast [y]$ i.e., $[x_1\ast y_1] = [x\ast y]$. Thus we have to show that
	$ (x \ast y) \to (x_1 \ast y_1) $ and $(x_1 \ast y_1) \to (x \ast y) $  both are in $F$.  \\
	By assumption $x \to x_1, x_1 \to x, y \to y_1, y_1\to y \in F$. \\ 
	Then by definition of a filter, $(x \to x_1)\ast(y \to y_1) \in F$. \\
	Now by Theorem \ref{lem1}.9, $x\ast y\ast(x \to x_1)\ast(y \to y_1)\leq x_1 \ast y_1$ \\
	By Residuation property, $(x \to x_1)\ast(y \to y_1)\leq (x\ast y) \to (x_1 \ast y_1)$. Then $(x\ast y) \to (x_1 \ast y_1) \in F$. \\  
	Similarly by changing the roles of $x, y$ with $x_1, y_1$ respectively we get $(x_1 \ast y_1)\to (x \ast y) \in F$. Thus $[x_1\ast y_1]= [x\ast y]$.
\end{itemize}
	So, we can see that $L/F$ is closed under the operations.  
\end{proof}
\begin{theorem}
$(L/F, \cap, \cup,\ast, \to ,[\bot], [1])$ is an IL-algebra with respect to the operations defined by \\
$[x]\cup [y]=[x\cup y]$ \\
$[x]\cap [y]=[x\cap y]$ \\
$[x]\ast [y]=[x\ast y]$ \\
$[x]\to [y]=[x\to y]$
\end{theorem}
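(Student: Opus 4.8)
The plan is to verify the three defining clauses of an IL-algebra for $L/F$ in turn, relying on the fact (already established in the preceding proposition) that each operation $\Box\in\{\cap,\cup,\to,\ast\}$ is well defined on congruence classes, and on Proposition \ref{prop2}, which pins down the induced order: $[x]\le[y]$ if and only if $x\to y\in F$.

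First I would check that $(L/F,\cup,\cap,[\bot])$ is a lattice with least element $[\bot]$. Take $[x]\le[y]$ to mean $[x]\cap[y]=[x]$. Reflexivity and transitivity of this relation are immediate from Proposition \ref{prop2}: reflexivity uses $1\le x\to x$ (Theorem \ref{lem1}.10), so $x\to x\in F$; transitivity uses $(x\to y)\ast(y\to z)\le x\to z$ (Theorem \ref{lem1}.5) together with closure of $F$ under $\ast$ and upward closure. Antisymmetry is exactly the definition of the congruence $\rho$: if $x\to y\in F$ and $y\to x\in F$ then $x\,\rho\,y$, i.e.\ $[x]=[y]$. That $[x]\cap[y]$ is the greatest lower bound of $[x]$ and $[y]$ follows because $x\cap y\le x,y$ gives $[x\cap y]\le[x],[y]$ by Proposition \ref{prop1}, while for any $[z]$ with $[z]\le[x]$ and $[z]\le[y]$ we have $z\to x, z\to y\in F$, hence $(z\to x)\cap(z\to y)\in F$, which by Lemma \ref{lem2} equals $z\to(x\cap y)$, so $[z]\le[x\cap y]$. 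The dual argument, with Lemma \ref{lem3} in place of Lemma \ref{lem2}, shows $[x]\cup[y]$ is the least upper bound. Finally $[\bot]\le[x]$ for every $x$, since $\bot\le x$ forces $\bot\to x\in F$ by Proposition \ref{prop1}.

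Next I would check that $(L/F,\ast,[1])$ is a commutative monoid: commutativity, associativity, and the unit law $[1]\ast[x]=[x]$ transfer directly from the identities $x\ast y=y\ast x$, $(x\ast y)\ast z=x\ast(y\ast z)$, $1\ast x=x$ in $L$ via the definition $[x]\ast[y]=[x\ast y]$; no filter-theoretic input is needed here beyond well-definedness.

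The last clause, residuation, is where the work is concentrated, and it is the step I expect to be the crux. Using Proposition \ref{prop2} and the identity $x\to(y\to z)=(x\ast y)\to z$ (Theorem \ref{lem1}.8), one obtains the chain of equivalences $[x]\ast[y]\le[z]\iff[x\ast y]\le[z]\iff(x\ast y)\to z\in F\iff x\to(y\to z)\in F\iff[x]\le[y\to z]\iff[x]\le[y]\to[z]$. Combining the three verifications yields that $L/F$ is an IL-algebra. The only genuine subtlety is ensuring that every inference about membership in $F$ is licensed by the filter axioms (closure under $\ast$, closure under $\cap$, and upward closure) and by the already-proved propositions and lemmas; once Proposition \ref{prop2} has identified the order on $L/F$, the remainder is bookkeeping.
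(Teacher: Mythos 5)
Your proposal is correct and follows essentially the same route as the paper: the paper likewise dismisses the lattice and monoid axioms as immediate and concentrates on verifying residuation via Proposition \ref{prop2} and Theorem \ref{lem1}.8. You merely spell out the ``obvious'' parts (the induced order, glb/lub via Lemmas \ref{lem2} and \ref{lem3}, the least element $[\bot]$) in more detail than the paper does, which is a welcome but not substantively different elaboration.
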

\begin{proof} It is obvious that 
		$(L/F, \cap, \cup)$ is a bounded lattice with least element [$\bot$] and 
$(L/F, \ast, [1])$ is a commutative monoid with identity [1].

 Let $[x],[y],[z] \in L/F$ be such that $[x]\ast[y]\leq [z]$. So, $[x\ast y] \leq [z]$. Then by Proposition \ref{prop2}, $x\ast y \to z \in F$. Again by Theorem \ref{lem1}.8, $(x\ast y) \to z \leq x\to (y\to z)$. So we can derive that $x\to(y \to z) \in F$. Thus by Proposition \ref{prop2}, we get  $[x]\leq [y \to z]$. \\ 
Conversely, let $x\to (y\to z)\in F$. Now $(x\to (y\to z))\ast x\ast y\leq (y\to z)\ast y\leq z$, by Theorem \ref{lem1}.9. So, by residuation property, $x\to(y \to z)\leq (x\ast y)\to z$.\\
 Hence $(L/F, \cap, \cup,\ast, \to ,[\bot], [1])$ is an IL-algebra.
\end{proof}
\begin{theorem}
	If $F=\{x\in L:1\leq x\}$ then $[x]=\{x\}$.
\end{theorem}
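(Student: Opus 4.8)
The plan is to identify the congruence $\rho$ induced by this particular $F$ and show it is just the identity relation. First I would record that $F=\{x\in L:1\leq x\}$ is genuinely a filter, so that the quotient construction of the preceding theorems applies and the statement is not vacuous: $1\in F$ trivially; if $1\leq x$ and $1\leq y$ then $1\leq x\cap y$ because $1$ is a lower bound of $\{x,y\}$, and $1\leq x\cup y\leq x\ast y$ by Theorem~\ref{lem1}.4, so both $x\cap y$ and $x\ast y$ lie in $F$; upward closure is immediate since $1\leq x\leq y$ forces $y\in F$.

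The key step is the equivalence, valid for all $a,b\in L$,
\[
a\to b\in F \iff a\leq b .
\]
The implication $a\leq b\Rightarrow a\to b\in F$ is exactly the reasoning behind Proposition~\ref{prop1}: $a\leq b$ gives $1\leq a\to b$. For the converse, if $a\to b\in F$ then $1\leq a\to b$, and the residuation property (applied with the monoid unit $1$ in the role of the left factor, so that $1\ast a = a$) yields $a = 1\ast a\leq b$.

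Finally I would unwind the definition of $[x]$. By definition $[x]=\{y\in L: x\to y\in F \text{ and } y\to x\in F\}$, and by the displayed equivalence this is $\{y\in L: x\leq y \text{ and } y\leq x\}$, which by antisymmetry of the lattice order equals $\{x\}$.

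I do not expect a substantive obstacle here; the proof is a direct unwinding. The only points requiring care are to note that $F$ is a filter (so the quotient is defined) and to invoke residuation in the correct direction to obtain the ``only if'' half of the displayed equivalence.
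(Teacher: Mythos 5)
Your proof is correct and follows essentially the same route as the paper: the heart of both arguments is that $a\to b\in F$ means $1\leq a\to b$, which by residuation ($1\ast a=a$) gives $a\leq b$, so mutual implication forces $x=y$ by antisymmetry. Your additional verification that $F=\{x\in L:1\leq x\}$ is indeed a filter is a worthwhile extra check that the paper omits, but it does not change the substance of the argument.
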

\begin{proof}
	Let $x\in L$.\\
	Assume that $y\in [x].$ then $x\to y$ and $x\to y$ both are in $F$.\\
	Then by construction of $F$, $1\leq x\to y$ and $1\leq y\to x$. So, $x\leq y$ and $y \leq x$.\\
	Then by combining these two, we get $x=y$.
\end{proof}

\section{Special types of filters}
\begin{definition}
A filter $F$ of an IL-algebra $L$ is said to be distributive if\\
 $((x\cup y)\cap(x\cup z))\to (x\cup (y\cap z))\in F$ for all $x,y,z\in L$.
\end{definition}
\begin{example}
Let $F=\{1,b,c,d,\top\}$ in Example \ref{ex2}. Then $F$ is a distributive filter in $L$.
\end{example}

\begin{example} \label{ex3}
	Let, $X=\{ \bot,a,b,1,\top \}$. Lattice ordering, $\ast$ and $\rightarrow$ tables are the following	
	\begin{center}
		\begin{tikzpicture}[scale=.7]
		\node (top) at (0,2) {$\top$};
		\node (one) at (-2,-1) {$1$}; 
		\node (a) at (-2,1) {$a$};
		\node (b) at (2,0) {$b$}; 
		\node (bot) at (0,-2) {$\bot$};
		\draw (bot) -- (b) -- (top);
		\draw (bot) -- (one) -- (a) -- (top);
		\end{tikzpicture}
	\end{center}
\begin{center}
			\begin{tabular}{ l | l l l l l }
			\textbf{$\ast$} & \textbf{$\bot$} & \textbf{$a$} & \textbf{$b$} & \textbf{$1$} & \textbf{$\top$}  \\ \hline
			$\bot$ & $\bot$ & $\bot$ & $\bot$ & $\bot$ & $\bot$ \\ 
			$a$ & $\bot$ & $\top$ & $\top$ & $1$ & $\top$ \\ 
			$b$ & $\bot$ & $\top$ & $\top$ & $b$ & $\top$ \\ 
			$1$ & $\bot$ & $a$ & $b$ & $1$ & $\top$ \\ 
			$\top$ & $\bot$ & $\top$ & $\top$ & $\top$ & $\top$
		\end{tabular}		
	\hspace{1cm}
			\begin{tabular}{ l | l l l l l }
			\textbf{$\rightarrow$} & \textbf{$\bot$} & \textbf{$a$} & \textbf{$b$} & \textbf{$1$} & \textbf{$\top$}  \\ \hline
			$\bot$ & $\top$ & $\top$ & $\top$ & $\top$ & $\top$ \\ 
			$a$ & $\bot$ & $1$ & $\bot$ & $\bot$ & $\top$ \\
			$b$ & $\bot$ & $\bot$ & $1$ & $\bot$ & $\top$ \\
			$1$ & $\bot$ & $a$ & $b$ & $1$ & $\top$ \\ 
			$\top$ & $\bot$ & $\bot$ & $\bot$ & $\bot$ & $\top$
		\end{tabular}
	\end{center}		
	
	Then, $(L,\ast,\cup,\cap,\rightarrow,1,\top)$ is an IL-algebra. \\
    In this example, if we take $F=\{1,a,\top\}$ \\
    We can see, $((1\cup a)\cap(1\cup b))\to (1\cup (a\cap b)) = \bot$, which is not in $F$, then $F$ is not a distributive filter.
\end{example}
\begin{theorem}
If $F$ be a distributive filter in an IL-algebra $L$ then $L/F$ is distributive.
\end{theorem}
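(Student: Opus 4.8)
The plan is to show that the lattice reduct $(L/F,\cap,\cup)$ satisfies the distributive identity $([x]\cup[y])\cap([x]\cup[z]) = [x]\cup([y]\cap[z])$ for all $x,y,z\in L$. In any lattice the inequality $[x]\cup([y]\cap[z]) \leq ([x]\cup[y])\cap([x]\cup[z])$ holds automatically, so it suffices to establish the reverse inequality $([x]\cup[y])\cap([x]\cup[z]) \leq [x]\cup([y]\cap[z])$; the rest is bookkeeping.

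First I would rewrite both sides using the operations on $L/F$. By the definitions $[u]\cup[v]=[u\cup v]$ and $[u]\cap[v]=[u\cap v]$, which we have already shown to be well defined, the left-hand side equals $[(x\cup y)\cap(x\cup z)]$ and the right-hand side equals $[x\cup(y\cap z)]$. Thus the inequality to be proved becomes $[(x\cup y)\cap(x\cup z)]\leq[x\cup(y\cap z)]$.

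Next I would invoke Proposition \ref{prop2}, which characterizes the order on $L/F$ by $[a]\leq[b]$ if and only if $a\to b\in F$. Taking $a=(x\cup y)\cap(x\cup z)$ and $b=x\cup(y\cap z)$, the desired inequality is equivalent to $((x\cup y)\cap(x\cup z))\to(x\cup(y\cap z))\in F$, which is precisely the hypothesis that $F$ is a distributive filter. Hence the inequality holds for every choice of $x,y,z\in L$, and together with the trivial reverse inequality this gives the distributive identity on $L/F$. Since every element of $L/F$ has the form $[x]$ for some $x\in L$, this shows $L/F$ is distributive.

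I do not expect a genuine obstacle: the argument is essentially a translation of the defining membership condition of a distributive filter through the order characterization of Proposition \ref{prop2}. The only point requiring minor care is to check that the directions line up — that the nontrivial half of the distributive law in $L/F$ is exactly the half corresponding to the membership condition in the definition of a distributive filter, while the other half comes for free in any lattice.
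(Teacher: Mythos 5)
Your proposal is correct and matches the paper's proof: both reduce the nontrivial inequality $([x]\cup[y])\cap([x]\cup[z])\leq[x]\cup([y]\cap[z])$ to the membership condition defining a distributive filter via Proposition \ref{prop2}, and both note that the reverse inequality is automatic (the paper verifies it explicitly in $L$ and lifts it, while you appeal to the general lattice fact, which is an immaterial difference).
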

\begin{proof}
We know that $x\cup (y\cap z)\leq x\cup y, x\cup z$. Hence $x\cup (y\cap z)\leq (x\cup y)\cap (x\cup z)$ and so $x\cup (y\cap z)\to (x\cup y)\cap (x\cup z)\in F$. Therefore by Proposition \ref{prop2}, $[x]\cup ([y]\cap [z]) \leq ([x]\cup[y])\cap ([x]\cup [z])$. \\
On the other hand, from the definition of distributive filter and from Proposition \ref{prop2}, we have $([x]\cup[y])\cap ([x]\cup [z]) \leq [x]\cup ([y]\cap [z]) $. Hence $L/F$ is distributive.
\end{proof}
\begin{definition}
A filter $F$ of an IL-algebra $L$ is said to be prime if for any $x, y \in L, \ x\to y \in F$ or $y\to x \in F$.
\end{definition}
\begin{example}
	In Example  \ref{ex2}, if $F=\{b,c,d,1,\top\}$, then $F$ is a prime filter.
\end{example}
\begin{example}
    In Example \ref{ex1}, if we take $F=\{1,\top\}$ \\
    We can see, both $c\to d$ and $d\to c$ are not in $F$, then $F$ is not a prime filter. 
\end{example}
From the definition of prime filter and Proposition \ref{prop2}, it follows that
\begin{theorem}
If $F$ be a prime filter, then $L/F$ is linear.
\end{theorem}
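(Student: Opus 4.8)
The claim is that the induced lattice order on $L/F$ is total, i.e.\ $L/F$ is a chain. So the goal is to show that for arbitrary classes $[x],[y]\in L/F$ we have $[x]\le[y]$ or $[y]\le[x]$. The plan is to reduce this order relation to membership in $F$ via the characterisation already proved.

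First I would recall Proposition~\ref{prop2}, which states $[x]\le[y]$ if and only if $x\to y\in F$. This is the only nontrivial ingredient, and it has already been established, so no work is needed here beyond citing it. (Implicitly one also uses that the partial order on the quotient is the lattice order coming from the operation $\cap$ on $L/F$, which is exactly the order referred to in Proposition~\ref{prop2}.)

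Next, fix $x,y\in L$. By the definition of a prime filter, either $x\to y\in F$ or $y\to x\in F$. Applying Proposition~\ref{prop2} to whichever alternative holds, we conclude $[x]\le[y]$ in the first case and $[y]\le[x]$ in the second. Since $[x]$ and $[y]$ were arbitrary elements of $L/F$, any two elements of $L/F$ are comparable, so $(L/F,\cap,\cup)$ is linearly ordered; i.e.\ $L/F$ is linear.

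I do not anticipate any real obstacle: the statement is an immediate consequence of Proposition~\ref{prop2} and the definition of primeness, which is presumably why the surrounding text already announces it with ``it follows that''. The only point worth a sentence of care is making explicit that the ``$\le$'' appearing in Proposition~\ref{prop2} is indeed the lattice order of the quotient IL-algebra, so that ``comparable for all pairs'' genuinely means ``linear''.
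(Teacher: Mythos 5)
Your proof is correct and is exactly the argument the paper intends: the theorem is stated immediately after the remark that it ``follows from the definition of prime filter and Proposition~\ref{prop2}'', i.e.\ primeness gives $x\to y\in F$ or $y\to x\in F$, and Proposition~\ref{prop2} translates this into $[x]\le[y]$ or $[y]\le[x]$. No differences worth noting.
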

\begin{definition}
	A filter $F$ of an IL-algebra $L$ is said to be maximal filter if it is not contained in other proper filters of $L$.
\end{definition}
\begin{example}\label{ex4}
	Let, $L=\{\bot,a,b,c,1,\top\}$. Lattice ordering, $*$ and $\rightarrow$ tables are the following \\
	
	\begin{center}
		\begin{tikzpicture}[scale=1]
		\node (top) at (0,5) {$\top$}; 
		\node (one) at (0,4) {$1$}; 
		\node (c) at (0,3) {$c$}; 
		\node (b) at (0,2) {$b$};
		\node (a) at (0,1) {$a$}; 
		\node (bot) at (0,0) {$\bot$};
		\draw (bot) --(a) -- (b) -- (c)--  (one) -- (top);
		\end{tikzpicture}
	\end{center}
\begin{center}
		\begin{tabular}{ l | l l l l l l }
			\textbf{$\ast$} & \textbf{$\bot$} & \textbf{$a$} & \textbf{$b$} & \textbf{$c$} & \textbf{$1$} & \textbf{$\top$}  \\ \hline
			$\bot$ & $\bot$ & $\bot$ & $\bot$ & $\bot$ & $\bot$ & $\bot$ \\ 
			$a$ & $\bot$ & $\top$ & $\top$ & $\top$ & $b$ & $\top$ \\ 
			$b$ & $\bot$ & $a$ & $b$ & $b$ & $b$ & $b$ \\ 
			$c$ & $\bot$ & $a$ & $b$ & $c$ & $c$ & $\top$ \\ 
			$1$ & $\bot$ & $a$ & $b$ & $c$ & $1$ & $\top$ \\ 
			$\top$ & $\bot$ & $a$ & $b$ & $\top$ & $\top$ & $\top$
		\end{tabular} 
		\hspace{0.5cm}	
		\begin{tabular}{ l | l l l l l l }
			\textbf{$\ast$} & \textbf{$\bot$} & \textbf{$a$} & \textbf{$b$} & \textbf{$c$} & \textbf{$1$} & \textbf{$\top$}  \\ \hline
			$\bot$ & $\top$ & $\top$ & $\top$ & $\top$ & $\top$ & $\top$ \\ 
			$a$ & $a$ & $\top$ & $\top$ & $\top$ & $\top$ & $\top$ \\
			$b$ & $\bot$ & $a$ & $\top$ & $\top$ & $\top$ & $\top$ \\
			$c$ & $\bot$ & $a$ & $b$ & $1$ & $1$ & $\top$ \\ 
			$1$ & $\bot$ & $a$ & $b$ & $c$ & $1$ & $\top$ \\ 
			$\top$ & $\bot$ & $a$ & $b$ & $b$ & $b$ & $\top$ 
		\end{tabular}

\end{center}
Then, $(L,\ast,\cup,\cap,\rightarrow,1,\top)$ is an IL-algebra. \\
In this example, $F_3=\{b,c,1,\top \}$ is a maximal filter. But, $F_1=\{1,\top \}$, $F_2=\{c,1,\top \}$ are not maximal filters, as both $F_1$ and $F_2$ are contained in the proper filter $F_3$.
\end{example}

\begin{example}\label{ex5}
	Let, $L=\{\bot,a,b,c,d,1,\top\}$. Lattice ordering, $*$ and $\rightarrow$ tables are the following \\
	
	\begin{center}
		\begin{tikzpicture}[scale=1]
		\node (top) at (0,2) {$\top$}; 
		\node (one) at (-1,1) {$1$};
		\node (a) at (-2,0) {$a$};
		\node (c) at (2,0) {$c$}; 
		\node (b) at (0,0) {$b$};
		\node (d) at (-1,-1) {$d$}; 
		\node (bot) at (0,-2) {$\bot$};
		\draw (bot) --(d) -- (a) --  (one) -- (top);
		\draw (d) -- (b) --  (one);
		\draw (bot) --(c) -- (top);
		\end{tikzpicture}
	\end{center}
\begin{center}
		\begin{tabular}{ l | l l l l l l l }
			\textbf{$\ast$} & \textbf{$\bot$} & \textbf{$a$} & \textbf{$b$} & \textbf{$c$} & \textbf{$d$} & \textbf{$1$} & \textbf{$\top$}  \\ \hline
			$\bot$ & $\bot$ & $\bot$ & $\bot$ & $\bot$ & $\bot$ &  $\bot$ & $\bot$ \\
			$a$ & $\bot$ & $a$ & $d$ & $c$ & $d$ & $a$ & $\top$ \\ 
			$b$ & $\bot$ & $d$ & $b$ & $c$ & $d$ & $b$ & $\top$ \\ 
			$c$ & $\bot$ & $c$ & $c$ & $d$ & $c$ & $c$ & $\top$ \\
			$d$ & $\bot$ & $d$ & $d$ & $c$ & $d$ & $d$ & $\top$ \\
			$1$ & $\bot$ & $a$ & $b$ & $c$ & $d$ & $1$ & $\top$ \\ 
			$\top$ & $\bot$ & $\top$ & $\top$ & $\top$ & $\top$ & $\top$ & $\top$
		\end{tabular} 
		\hspace{0.5cm}	
		\begin{tabular}{ l | l l l l l l l }
			\textbf{$\rightarrow$} & \textbf{$\bot$} & \textbf{$a$} & \textbf{$b$} & \textbf{$c$} & \textbf{$d$} & \textbf{$1$} & \textbf{$\top$}  \\ \hline
			$\bot$ & $\top$ & $\top$ & $\top$ & $\top$ & $\top$ & $\top$ & $\top$ \\ 
			$a$ & $\bot$ & $1$ & $b$ & $c$ & $b$ & $1$ & $\top$ \\
			$b$ & $\bot$ & $a$ & $1$ & $c$ & $a$ & $1$ & $\top$ \\
			$c$ & $\bot$ & $c$ & $c$ & $1$ & $c$ & $1$ & $\top$ \\
			$d$ & $\bot$ & $1$ & $1$ & $c$ & $1$ & $1$ & $\top$ \\
			$1$ & $\bot$ & $a$ & $b$ & $c$ & $d$ & $1$ & $\top$ \\ 
			$\top$ & $\bot$ & $\bot$ & $\bot$ & $\bot$ & $\bot$ & $\bot$ & $\top$ 
		\end{tabular}
\end{center}
Then, $(L,\ast,\cup,\cap,\rightarrow,1,\top)$ is an IL-algebra. \\
    In this example, $F_4=\{a,b,d,1,\top \}$ is a maximal filter. But, $F_1=\{1,\top \}$, $F_2=\{a,1,\top \}$, $F_3=\{b,1,\top \}$ are not maximal filters, as $F_1$, $F_2$ and $F_3$ are contained in the proper filter $F_4$.
\end{example}

\begin{definition}
Let $L$ be an IL-algebra. A non-empty subset $F$ of $L$ is said to be an implicative filter if 
\begin{itemize}
	\item $1\in F$ 
	\item If $x\to (y\to z) \in F$ and $x\to y \in F$ then $x\to z\in F$.
\end{itemize}
\end{definition}
\begin{example}
    In Example \ref{ex2}, $F=\{b,c,d,1,\top\}$ is an implicative filter.
\end{example}
\begin{example}
    In Example \ref{ex3}, if we take $F_3=\{b,c,1,\top\}$ \\
    We can see, $a\to (a\to \bot) \in F_3$ and $a\to \bot \in F_3$, but $a\to \bot \notin F_3$, then $F_3$ is not an implicative filter. \\
    Similarly, in Example \ref{ex5}, if we take $F_4=\{a,b,d,1,\top\}$ \\
    We can see, $c\to (c\to 1) \in F_4$ and $c\to c \in F_4$, but $c\to 1 \notin F_4$, then $F_4$ is not an implicative filter.
\end{example}
\begin{proposition}
If every element of an IL-algebra $L$ is idempotent and $F$ be a filter in $L$, then $F$ is an implicative filter.
\end{proposition}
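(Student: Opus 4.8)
The plan is to verify the two defining conditions of an implicative filter directly. The first, $1 \in F$, is immediate since $F$ is a filter. For the second, I would assume $x \to (y \to z) \in F$ and $x \to y \in F$ and aim to conclude $x \to z \in F$.

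First I would form the product $p := (x \to (y \to z)) \ast (x \to y)$, which lies in $F$ because a filter is closed under $\ast$. The key reduction is then to show $p \leq x \to z$; once this is done, upward closure of $F$ gives $x \to z \in F$ and we are finished. To prove $p \leq x \to z$, by the residuation property it suffices to show $p \ast x \leq z$. This is where idempotency is used: since $x \ast x = x$ (in particular $x \le x \ast x$), I can duplicate the copy of $x$ and estimate
\[
p \ast x \;=\; (x \to (y \to z)) \ast (x \to y) \ast x \;\le\; (x \to (y \to z)) \ast (x \to y) \ast x \ast x .
\]
Rearranging by commutativity and associativity of $\ast$, this equals $\bigl[(x\to(y\to z))\ast x\bigr] \ast \bigl[(x\to y)\ast x\bigr]$; by Theorem \ref{lem1}.9 the first bracket is $\le y\to z$ and the second is $\le y$, so by monotonicity (Theorem \ref{lem1}.7) the whole expression is $\le (y\to z)\ast y \le z$, again by Theorem \ref{lem1}.9. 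Hence $p \ast x \le z$, so $p \le x \to z$, and therefore $x \to z \in F$.

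The argument is short, and the only genuine idea is invoking idempotency to split off an extra copy of $x$ so that Theorem \ref{lem1}.9 can be applied twice; I do not anticipate any real obstacle beyond keeping the bookkeeping of the commutative monoid operations straight. Note also that, although the definition of filter for IL-algebras includes closure under $\cap$, this proof only uses closure under $\ast$ and upward closure.
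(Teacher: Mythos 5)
Your proof is correct and follows essentially the same route as the paper's: form the product $(x\to(y\to z))\ast(x\to y)\in F$, use idempotency of $x$ to duplicate it, apply Theorem \ref{lem1}.9 twice with monotonicity, and finish by residuation and upward closure. No differences worth noting.
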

\begin{proof}
Since $F$ is filter, $1 \in F$. Let for all $x,y,z \in L,\ x\to (y\to z)\in F$ and $x\to y\in F$. \\
Thus $(x\to (y\to z))\ast (x\to y)\in F$.\\
Now by Theorem \ref{lem1}.9, $x\ast (x\to y)\leq y$ and $x\ast (x\to (y\to z))\leq y\to z$. \\
Hence by Theorem \ref{lem1}.7 and \ref{lem1}.9, $x\ast x\ast(x\to y)\ast (x\to (y\to z))\leq y\ast (y\to z)\leq z$.\\
So, by residuation property, $(x\to y)\ast (x\to (y\to z))\leq x\to z$ (as $x\ast x=x$).\\
 Thus $x\to z \in F$.
\end{proof}
\textbf{Note:} Converse of the above theorem is not true. As we can see in example \ref{ex2}, $F=\{b,c,d,1,\top \}$ is an implicative filter. But, all elements of this IL-algebra $L$ are not idempotent.

\begin{definition}
	An algebraic structure  $L=(L,\cup ,\cap, 0, \to, \ast,1 )$ is said to be a residuated lattice if
\begin{itemize}
	\item $(L,\cup ,\cap, 0, 1)$ is a bounded lattice.
	\item $(L,\ast, 1)$ is a commutative monoid.	
	\item for any $x,y,z \in L$ , $x\ast y \leq z$ if and only if $x \leq y \to z$.
\end{itemize}
\end{definition}
\begin{proposition}\label{rl}
	In an IL-algebra $L$, $x\ast y\leq x$ if and only if $\top=1$ for all $x,y\in L$
\end{proposition}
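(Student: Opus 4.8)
The plan is to prove the two implications separately, in each case exploiting only the unit law $1\ast x = x$ for the commutative monoid $(L,\ast,1)$ together with the fact (Theorem \ref{lem1}.2) that $\top$ is the largest element of $L$.

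For the ``only if'' direction I would assume that $x\ast y\leq x$ holds for \emph{all} $x,y\in L$ and then choose the instantiation $x=1$, $y=\top$. Since $1$ is the monoid unit, $1\ast\top=\top$, so the hypothesis yields $\top\leq 1$. On the other hand $1\leq\top$ because $\top$ is the greatest element of $L$; combining the two inequalities gives $\top=1$.

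For the ``if'' direction I would assume $\top=1$ and fix arbitrary $x,y\in L$. Because $\top$ is the largest element, $y\leq\top=1$. Applying monotonicity of $\ast$ (Theorem \ref{lem1}.7) with $x\leq x$ and $y\leq 1$ gives $x\ast y\leq x\ast 1=x$, again using the unit law. Hence $x\ast y\leq x$ for all $x,y\in L$, completing the equivalence.

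There is essentially no obstacle here: both directions are one-line arguments. The only point requiring a moment's thought is selecting the right witnesses ($x=1$, $y=\top$) in the forward direction so that the monoid unit collapses the left-hand side to $\top$; once that choice is made, the rest is immediate from the order-theoretic properties of $\top$ and the monotonicity of $\ast$ recorded in Theorem \ref{lem1}.
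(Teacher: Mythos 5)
Your proposal is correct and follows essentially the same route as the paper: both directions come down to instantiating at $1$ and $\top$, using the unit law and monotonicity of $\ast$. The paper's forward direction first derives $x\ast\top=x$ for all $x$ before specializing to $x=1$, whereas you specialize immediately, but this is an inessential difference.
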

\begin{proof}
	Let $x\ast y\leq x.$ Now $x=x\ast 1  \leq x\ast \top\leq x$. So $x\ast \top=x$. Thus $1\ast \top=1$ or $\top=1$. \\
	Conversely, assume that $\top=1$. Now $y\leq \top $ or $x\ast y\leq x\ast \top =x.$
\end{proof}
\begin{note}
	From the Proposition \ref{rl}, it follows that an IL-algebra $L$ satisfying $x\ast y\leq x$  for all $x,y\in L$ (or equivalently $\top=1$) is a residuated lattice. \\
	If we consider $\top=1$ in an IL-algebra, then by theorem \ref{lem1}.3, the condition `$x \cap y \in F$ for all $x,y\in F$' of the definition of filter $F$ on IL-algebra becomes redundant. Thus the concept of filter on IL-algebra generalizes the notion of filter on Residuated Lattice.
\end{note}
\begin{definition}
	A filter $F$ of an IL-algebra is called an affine filter if and only if $\top \to 1\in F$.
\end{definition}
\begin{example}
    In Example \ref{ex4}, $\top \to 1 = b$, so that makes $F_3=\{b,c,1,\top \}$ an affine filter. \\
    Whereas, both $F_1=\{1,\top \}$ and $F_2=\{c,1,\top \}$ are not affine filters.
\end{example}
\begin{theorem}
	Let $F$ be an affine  filter in an  IL-algebra $L$, then $L/F$ is a residuated lattice.
\end{theorem}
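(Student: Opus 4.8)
The plan is to reduce the statement to the criterion recorded in the Note following Proposition \ref{rl}: an IL-algebra is a residuated lattice exactly when its greatest element coincides with the monoid unit. We already know from the earlier theorem that $(L/F,\cap,\cup,\ast,\to,[\bot],[1])$ is an IL-algebra, with least element $[\bot]$ and unit $[1]$; moreover its greatest element is $[\bot]\to[\bot]=[\bot\to\bot]=[\top]$ by Theorem \ref{lem1}.2. So everything comes down to showing $[\top]=[1]$ in $L/F$.

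To prove $[\top]=[1]$, recall that $[\top]=[1]$ means $\top\,\rho\,1$, i.e. $\top\to 1\in F$ and $1\to\top\in F$. The first membership is precisely the hypothesis that $F$ is an affine filter. For the second, since $\top$ is the greatest element of $L$ we have $1\leq\top$, hence $1\to\top\in F$ by Proposition \ref{prop1}. Therefore $[\top]=[1]$, so the greatest element of $L/F$ equals its unit.

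Finally, apply Proposition \ref{rl} to the IL-algebra $L/F$: since $[\top]=[1]$ holds there, we obtain $[x]\ast[y]\leq[x]$ for all $[x],[y]\in L/F$, and $L/F$ is bounded (bottom $[\bot]$, top $[1]=[\top]$) and satisfies residuation, so it is a residuated lattice. There is essentially no obstacle here; the only points requiring a line of care are confirming that $[\top]$ really is the top of $L/F$ (via Theorem \ref{lem1}.2 applied inside $L/F$) and that Proposition \ref{rl}, phrased for an arbitrary IL-algebra, may be invoked for $L/F$.
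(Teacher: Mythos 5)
Your proposal is correct and follows the same route as the paper: establish $1\to\top\in F$ from $1\leq\top$, use the affine hypothesis for $\top\to 1\in F$, conclude $[\top]=[1]$, and hence that $L/F$ is a residuated lattice. You simply make explicit the final appeal to Proposition \ref{rl} (and the identification of $[\top]$ as the top of $L/F$), which the paper leaves implicit.
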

\begin{proof}
	In any IL-algebra $1\leq \top$ implies $1\to \top \in F$. Again by definition of affine filter $ \top \to 1\in F$, then by definition of equivalence classes defined earlier, $[1]=[\top]$. \\
	Hence $L/F$ is a residuated lattice.
\end{proof}
 
 \section{Conclusion}
We introduce the concept of filters on IL-algebra in this paper. Some other algebraic structures closely related to IL-algebra, namely ILZ-algebra, CL-algebra, are not considered here. Filters on these structures may be taken into consideration in future. Further properties on filters of IL-algebra may also be investigated. This paper is a first step in this new area of research. It will open scope of work on multiple areas, study of filters on algebraic structures related with linear logic, reflection of algebraic results based on filters in corresponding logic and many more.
\section{Acknowledgments}

The last two authors acknowledge Department of Higher Education, Science \& Technology and Bio-Technology, Government of West Bengal, India for the financial support in the research project 257(Sanc)/ST/P/S\&T/16G-45/2017 dated 25.03.2018.

%\section*{References}

\end{document}